\newtheorem{thm}{Theorem}
\newtheorem{cor}[thm]{Corollary}
\newtheorem{lem}[thm]{Lemma}
\theoremstyle{definition}
\newtheorem{defn}[thm]{Definition}
\newtheorem{example}[thm]{Example}
\newtheorem*{Main}{Theorem}
\numberwithin{thm}{section}
\newcommand{\C}{\mathbb{C}}
\renewcommand{\Im}{\textrm{Im}}
\newcommand{\Fun}{\textrm{Fun}}
\begin{document}
\title{Factorization in generalized Calogero-Moser spaces}
\author{Gwyn Bellamy}
\address{School of Mathematics and Maxwell Institute for Mathematical Sciences, University of Edinburgh, James Clerk Maxwell Building, Kings Buildings, Mayfield Road, Edinburgh EH9 3JZ, Scotland}
\email{G.E.Bellamy@sms.ed.ac.uk}
\maketitle
\begin{abstract}
\noindent Using a recent construction of Bezrukavnikov and Etingof, \cite{108}, we prove that there is a factorization of the Etingof-Ginzburg sheaf on the generalized Calogero-Moser space associated to a complex reflection group. In the case $W = S_n$, this confirms a conjecture of Etingof and Ginzburg, \cite{1}.
\end{abstract}

\section{Introduction}
In this paper we apply a recent construction of Bezrukavnikov and Etingof, \cite{108}, to the study of the centres of the rational Cherednik algebras at $t = 0$. The affine varieties $X_{\mathbf{c}}(W)$, corresponding to these centres are called the \textit{generalized Calogero-Moser spaces} and are known to influence the representation theory of the algebras. We show that there exists an isomorphism of schemes
\begin{equation}\label{eq:intro}
\Phi : \pi^{-1}_{W}(b) \stackrel{\sim}{\longrightarrow} \pi^{-1}_{W_b}(0),
\end{equation}
where $\pi_W : X_{\mathbf{c}}(W) \twoheadrightarrow \mathfrak{h}/W$ and $W_b$ is a parabolic subgroup of $W$ associated to the orbit $b \in \mathfrak{h}/W$ (for precise definitions see Section \ref{sec:definitions}). In order to relate the representation theory of the algebras $H_{0,\mathbf{c}}$ to the varieties $X_{\mathbf{c}}$, Etingof and Ginzburg introduced the coherent sheaf $\mathcal{R}[W]$, defined by $\Gamma (X_{\mathbf{c}},\mathcal{R}[W]) = H_{0,\mathbf{c}} \mathbf{e}$. Our main result describes the pushforward of $\mathcal{R}[W]_{|_{\pi^{-1}_{W}(b)}}$ by $\Phi$.
\begin{Main}
On $\pi^{-1}_{W_b}(0)$ there is an isomorphism of $W$-equivariant sheaves
\begin{displaymath}
\Phi_* \left( \mathcal{R} [ W ]_{|_{\pi_W^{-1}(b)}} \right) \simeq \texttt{Ind}_{\, W_b}^{\, W} \mathcal{R} [ W_b ]_{|_{\pi_{W_b}^{-1}(0)}}
\end{displaymath}
\end{Main}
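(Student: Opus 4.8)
The plan is to lift the geometric identification $\Phi$ to an isomorphism of modules over the completed rational Cherednik algebras, treating the existence of $\Phi$ in \eqref{eq:intro} as the shadow on centres of the Bezrukavnikov--Etingof isomorphism. I would first reformulate both sides module-theoretically. Since $W$ acts trivially on the base $X_{\mathbf{c}}(W_b)$, a $W$-equivariant sheaf on $\pi_{W_b}^{-1}(0)$ is simply a coherent sheaf carrying a $\C W$-action; under this dictionary $\mathcal{R}[W]$ is attached to the $(\C W, \C[X_{\mathbf{c}}(W)])$-bimodule $H_{0,\mathbf{c}}(W)\e$, the left $\C W$-structure coming from $\C W \subset H_{0,\mathbf{c}}(W)$, and likewise $\mathcal{R}[W_b]$ from $H_{0,\mathbf{c}}(W_b)\e$. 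Writing $\mathfrak{m}_b \subset \C[\h]^W$ and $\mathfrak{m}_0 \subset \C[\h]^{W_b}$ for the maximal ideals at $b$ and $0$, the fibres $\pi_W^{-1}(b)$ and $\pi_{W_b}^{-1}(0)$ are Artinian, so restriction to them factors through completion at $\mathfrak{m}_b$, resp.\ $\mathfrak{m}_0$. It therefore suffices to match $H_{0,\mathbf{c}}(W)\e$ with $\texttt{Ind}_{\,W_b}^{\,W} H_{0,\mathbf{c}}(W_b)\e$ after completing along the respective fibres, as bimodules over $\C W$ and over the completed centres (which $\Phi$ identifies).

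Next I would invoke \cite{108} in its module form. Let $\widehat{H}_b$ and $\widehat{H}_0$ be the completions of $H_{0,\mathbf{c}}(W)$ and $H_{0,\mathbf{c}}(W_b)$ along $\mathfrak{m}_b$ and $\mathfrak{m}_0$. The Bezrukavnikov--Etingof construction provides an algebra isomorphism
\begin{displaymath}
\theta : \widehat{H}_b \stackrel{\sim}{\longrightarrow} \End_{\widehat{H}_0}\!\left( \Fun_{W_b}(W, \widehat{H}_0) \right),
\end{displaymath}
where $P := \Fun_{W_b}(W, \widehat{H}_0) \simeq \texttt{Ind}_{\,W_b}^{\,W}\widehat{H}_0$ is the induced bimodule, free of rank $[W:W_b]$ as a right $\widehat{H}_0$-module. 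The map $\theta$ is compatible with the inclusions of $\C W$ and $\C W_b$, the former acting on $P$ through the left tensor factor; passing to centres recovers the completed form of $\Phi$, so that $\widehat{Z}_0 := Z(\widehat{H}_0) \cong Z(\widehat{H}_b)$ and the fibres are identified.

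The core of the argument is to compute $\theta(\e)$ and recognise the resulting module. Using
\begin{displaymath}
\e \;=\; \frac{1}{[W:W_b]} \sum_{g} g\, \e_{W_b},
\end{displaymath}
the sum over a set of representatives for $W/W_b$ and $\e_{W_b}\in\C W_b$ the symmetrizer of $W_b$, the idempotent $\theta(\e)$ becomes the projection onto $\e P \cong \e_{W_b}\widehat{H}_0$. Computing the left ideal then gives $\theta(\widehat{H}_b\e)=\End_{\widehat{H}_0}(P)\,\theta(\e)\cong \operatorname{Hom}_{\widehat{H}_0}(\e P, P)\cong P\e_{W_b}\simeq \texttt{Ind}_{\,W_b}^{\,W}(\widehat{H}_0\e_{W_b})$, with right $\widehat{Z}_0$-action through the $\widehat{H}_0\e_{W_b}$ factor and left $\C W$-action induced from $\C W_b$. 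Finally I would base-change along $\widehat{Z}_0 \twoheadrightarrow \widehat{Z}_0/\mathfrak{m}_0$: since $\C W$ is free over $\C W_b$, induction is exact and commutes with this tensor operation (it is applied on the opposite side to the central quotient), so $\texttt{Ind}_{\,W_b}^{\,W}$ of the restricted sheaf equals the restriction of the induced sheaf, yielding the fibrewise isomorphism asserted.

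The main obstacle is this middle step: pinning down $\theta(\e)$ precisely enough to see the genuine induction functor rather than some a priori different Morita twist of $\widehat{H}_0\e_{W_b}$. This requires unwinding exactly how the inclusion $\C W \subset H_{0,\mathbf{c}}(W)$ is transported by $\theta$ onto the left $\C W$-action on $\Fun_{W_b}(W, \widehat{H}_0)$, and checking that the isomorphisms $\End_{\widehat{H}_0}(P)\theta(\e)\cong P\e_{W_b}$ are $\C W$-equivariant, so that the equivariant structures agree. A secondary, more technical point is to justify that forming the Etingof--Ginzburg sheaf commutes with completion along the fibre and with restriction to it, so that the completed bimodule isomorphism descends to the claimed isomorphism of $W$-equivariant sheaves on $\pi_{W_b}^{-1}(0)$.
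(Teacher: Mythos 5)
Your proposal follows essentially the same route as the paper: the centralizer algebra $C(W,W_b,A)$ of \cite{108} is exactly your $\End_{\widehat{H}_0}(P)$ with $P = \Fun_{W_b}(W,\widehat{H}_0)$, your identification $\End_{\widehat{H}_0}(P)\,\theta(\mathbf{e}) \cong P\mathbf{e}_{W_b} \simeq \texttt{Ind}_{\,W_b}^{\,W}(\widehat{H}_0\mathbf{e}_{W_b})$ is precisely the paper's Lemma \ref{lem:centralizeriso}, and the compatibility of the two central actions is handled there via the Satake isomorphism and the formula (\ref{eq:phiiso}), just as you indicate. The two ``obstacles'' you flag are genuine but are exactly what the paper supplies: the $\C W$-equivariance check is the explicit $\zeta$, $\eta$ computation in Lemma \ref{lem:centralizeriso}, and the compatibility of completion/restriction with the fibre is Lemma \ref{lem:auto} together with Corollary \ref{cor:quoiso}, so your plan is correct and differs from the paper only in completing first and passing to the fibre last.
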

\noindent In particular, this theorem proves that the sheaf $\mathcal{R}[W]$ on the Calogero-Moser space associated to the symmetric group factorizes as conjectured by Etingof and Ginzburg, \cite[page 319]{1}. 

\section{The rational Cherednik algebra}\label{sec:definitions}

\subsection{Definitions and notation}\label{subsection:defns}

Let $W$ be a complex reflection group, $\mathfrak{h}$ its reflection
representation over $\C$ with rank $\mathfrak{h} = n$, and $\mathcal{S}$ the set of all complex reflections in $W$. The idempotent in $\C W$ corresponding to the trivial representation will be denoted $\mathbf{e}_W$. Let $( \cdot, \cdot ) : \mathfrak{h} \times \mathfrak{h}^* \rightarrow \C$ be the natural pairing defined by $(y,x) = x(y)$. For $s \in S$, fix $\alpha_s \in \mathfrak{h}^*$ to be a basis of the one dimensional space $\Im(s - 1)|_{\mathfrak{h}^*}$ and $\alpha_s^{\vee} \in \mathfrak{h}$ a basis of the one dimensional space $\Im(s - 1)|_{\mathfrak{h}}$ such that $\alpha_s(\alpha_s^\vee) = 2$. Choose $\mathbf{c} : \mathcal{S} \rightarrow \C$ to be a $W$-invariant function and $t$ a complex number. The \textit{rational Cherednik algebra}, $H_{t,\mathbf{c}}(W)$, as introduced by Etingof and Ginzburg, \cite[page 250]{1}, is the quotient of the skew group algebra of the tensor algebra, $T(\frak{h} \oplus \frak{h}^*) \rtimes W$, by the ideal generated by the relations

\begin{equation}\label{eq:rel}
[x_1,x_2] = 0 \qquad [y_1,y_2] = 0 \qquad [x_1,y_1] = t (y_1,x_1) - \sum_{s \in S} \mathbf{c}(s) (y_1,\alpha_s)(\alpha_s^\vee,x_1) s 
\end{equation}
\noindent $\forall x_1,x_2 \in \mathfrak{h}^* \textrm{ and } y_1,y_2 \in \mathfrak{h}$.\\

\noindent Since there is an isomorphism $H_{\lambda t,\lambda \mathbf{c}}(W) \cong H_{t,\mathbf{c}}(W)$ for any $\lambda \in \C^*$, we can restrict ourselves to considering the cases $t = 0$ or $1$. 

\subsubsection{Parabolic subgroups}

For a point $b \in \mathfrak{h}$, the stabilizer subgroup of $W$ with respect to $b$ will be denoted $W_b$. By a theorem of Steinberg, \cite[Theorem 1.5]{8}, $W_b$ is itself a complex reflection group. If $(\mathfrak{h}^{*W_b})^\perp$ denotes the vector subspace of $\mathfrak{h}$ consisting of all vectors $y$ in $\mathfrak{h}$ such that $x(y) = 0 \quad \forall y \in \mathfrak{h}^{*W_b}$ then $\mathfrak{h} = \mathfrak{h}^{W_b} \oplus (\mathfrak{h}^{*W_b})^\perp$ is a decomposition of $\mathfrak{h}$ as a $W_b$-module. Note that $(\mathfrak{h}^{*W_b})^\perp$ is a faithful reflection representation of $W_b$ of minimal rank. 

\subsubsection{Centralizer algebras}

We recall the centralizer construction described in \cite[3.2]{108}. Let $A$ be an $\C$-algebra equipped with a homomorphism $H \longrightarrow A^{\times}$, where $H$ is a finite group. Let $G$ be another finite group such that $H$ is a subgroup of $G$. The algebra $C(G,H,A)$ is defined to be the centralizer of $A$ in the right $A$-module $P := \Fun_H(G,A)$ of $H$-invariant, $A$-valued functions on $G$. By making a choice of left coset representatives of $H$ in $G$, $C(G,H,A)$ is realized as the algebra of $|G/H|$ by $|G/H|$ matrices over $A$. For $w,g \in G$ and $f \in \Fun_H(G,A)$, $w \cdot f (g) := f(gw)$ defines, by linearity, an embedding $\iota : \C G \hookrightarrow C(G,H,A)$. Let $\mathbf{e}_{G} \in \C G$ and $\mathbf{e}_{H} \in \C H$ denote the idempotents corresponding to the trivial representation of $G$ and $H$ respectively, where $\C H$ is considered as a subalgebra of $A$.

\begin{lem}\label{lem:centralizeriso}
There are isomorphisms of $\C G$-$Z(A)$-bimodules
\begin{displaymath}
C(G,H,A) \cdot \iota (\mathbf{e}_G) \simeq \Fun_H(G,A \mathbf{e}_H) \simeq \texttt{Ind}_{\, H}^{\, G} \, A \mathbf{e}_H,
\end{displaymath}
where $Z(A)$ denotes the centre of $A$. Here $\C G$ acts on $C(G,H,A)$ by multiplication on the left via $\iota$ and on the left of $\Fun_H(G,A \mathbf{e}_H)$ also via $\iota$. 
\end{lem}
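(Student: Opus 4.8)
The plan is to prove the two isomorphisms separately and then compose them, working throughout with the explicit description of $C(G,H,A)$ as $|G/H| \times |G/H|$ matrices over $A$ coming from a fixed choice of left coset representatives. Since $C(G,H,A)$ is by definition the endomorphism algebra of the right $A$-module $P = \Fun_H(G,A)$, the left ideal $C(G,H,A) \cdot \iota(\e_G)$ consists of those endomorphisms that factor through the image of $\iota(\e_G)$. The first step is therefore to identify the image of the idempotent $\iota(\e_G)$ explicitly and to recognize the left ideal it cuts out.

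The cleanest route is to exploit the general principle that for an idempotent $e$ in an endomorphism ring $\End_A(P)$, the left ideal $\End_A(P)\cdot e$ is naturally isomorphic, as a module over the opposite-acting structure, to $\Hom_A(\Im e, P)$ via $\phi \mapsto \phi|_{\Im e}$ — but here I want the $\C G$-$Z(A)$-bimodule structure, so I would instead argue directly. First I would compute $P \cdot \iota(\e_G)$, the image of the idempotent acting on the right: since $\e_G = |G|^{-1}\sum_{w \in G} w$ and $w$ acts by $(w\cdot f)(g) = f(gw)$, an element of $\Fun_H(G,A)$ lies in the image precisely when it is right-$G$-invariant on top of being left-$H$-invariant, which forces it to be constant on $G$ and hence determined by a single value in $A^{H}$. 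The subtlety is that left-$H$-invariance, $f(hg) = \rho(h)f(g)$ where $\rho : H \to A^\times$ is the structure map, interacts with this to pin the value down to $A\e_H$: evaluating the averaged function and using that $\e_H = |H|^{-1}\sum_{h\in H} h$ shows the relevant value lands in $A\e_H$. This gives the middle isomorphism $C(G,H,A)\cdot\iota(\e_G) \simeq \Fun_H(G,A\e_H)$, where one checks that postcomposition realizes any $A$-linear, $H$-equivariant map $P \to P$ landing in the image, and that the residual data is exactly an element of $\Fun_H(G,A\e_H)$.

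For the second isomorphism, $\Fun_H(G,A\e_H) \simeq \texttt{Ind}_{\,H}^{\,G} A\e_H$, I would invoke the standard adjunction identifying $H$-invariant (equivariant) functions on $G$ with the induced module $\C G \otimes_{\C H} (A\e_H)$; concretely, picking coset representatives $g_1,\dots,g_r$ of $H$ in $G$, a function $f \in \Fun_H(G,A\e_H)$ is determined freely by its values $f(g_1),\dots,f(g_r)$, and the map $f \mapsto \sum_i g_i \otimes f(g_i)$ is the desired $Z(A)$-linear bijection. The only real content is verifying it is $\C G$-equivariant for the left $\iota$-action: unwinding $(w\cdot f)(g) = f(gw)$ and comparing with the left-multiplication action of $w$ on $\C G \otimes_{\C H} A\e_H$ is a bookkeeping exercise in rewriting $g_i w = h_{i} g_{\sigma(i)}$ and using the defining equivariance $f(hg) = \rho(h)f(g)$.

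The main obstacle — and the step I would spend the most care on — is tracking the $\C G$-action correctly through the first isomorphism, since the left-multiplication action via $\iota$ on the endomorphism algebra $C(G,H,A)$ is not obviously the same as the left-regular action on $\Fun_H(G,A\e_H)$, and the definition $(w\cdot f)(g) = f(gw)$ uses a \emph{right} translation, so signs and side-conventions must be reconciled. I expect the verification that $\iota$ is genuinely an algebra embedding compatible with these two actions, together with confirming $Z(A)$-bilinearity commutes past everything, to be where a careless argument would go wrong; everything else is the routine identification of an idempotent's image and the Frobenius-reciprocity description of equivariant functions.
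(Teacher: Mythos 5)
Your strategy is the one the paper itself uses: the paper's map $\zeta(M\cdot\iota(\e_G)) = M(\delta)$, with $\delta$ the constant function of value $\e_H$, is precisely the identification $C(G,H,A)\cdot\iota(\e_G)\simeq \textrm{Hom}_A(\Im\,\iota(\e_G),P)$ from your first paragraph, evaluated on the generator $\delta$ of the image; and the second isomorphism is evaluation at coset representatives in both accounts. The route is right, but there is a left/right error at the decisive step. The image of $\iota(\e_G)$ consists of the constant functions in $P$, and the constant value lies in $A^H=\e_H A$ (it must satisfy $c=hc$ for all $h\in H$), \emph{not} in $A\e_H$ as you assert; since $A$ is noncommutative these differ. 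The module $A\e_H$ only appears at the next stage, which your sketch skips: $\Im\,\iota(\e_G)=\delta\cdot A\simeq\e_H A$ is a rank-one projective, and $\textrm{Hom}_A(\e_H A,P)\simeq P\e_H=\Fun_H(G,A\e_H)$ because a right $A$-linear map out of $\e_H A$ is determined by the image of $\e_H$, which is forced into $P\e_H$. Deriving $\Fun_H(G,A\e_H)$ directly from the values of the functions in the image, as you do, reaches the right answer from a false premise.

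The same confusion resurfaces in your description of the left ideal. ``Postcomposition realizing maps $P\to P$ landing in the image'' describes $\iota(\e_G)\cdot C(G,H,A)\simeq\textrm{Hom}_A(P,\Im\,\iota(\e_G))\simeq(\e_H A)^{|G/H|}$, i.e.\ the \emph{right} ideal; the left ideal $C(G,H,A)\cdot\iota(\e_G)=\{M\circ\iota(\e_G) : M \in C(G,H,A)\}$ is instead $\textrm{Hom}_A(\Im\,\iota(\e_G),P)\simeq(A\e_H)^{|G/H|}$, and only this one carries the left $\C G$-action by postcomposition with $\iota(g)$ that the lemma asserts (your opening paragraph states this correct version, so the two halves of your sketch contradict each other). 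Finally, the assignment $f\mapsto\sum_i g_i\otimes f(g_i)$ for the second isomorphism is not well defined on $\C G\otimes_{\C H}A\e_H$ under $g_i\mapsto hg_i$; with the convention $f(hg)=hf(g)$ you need $f\mapsto\sum_i g_i^{-1}\otimes f(g_i)$, or representatives of the opposite cosets. All of this is repairable, but as written the computation of the first isomorphism lands on the wrong side of $\e_H$.
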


\begin{proof}
The second isomorphism is clear from the definition of
$\Fun_H(G,A)$. Let $\delta \in \Fun_H(G,A)$ be the function defined by
$\delta (g) = \mathbf{e}_H$, for all $g \in G$. We define a linear map $\zeta$ from $C(G,H,A) \cdot \iota (\mathbf{e}_G)$ to $\Fun_H(G,A \mathbf{e}_H)$ and a map $\eta$ in the opposite direction by
\begin{displaymath}
\zeta : \quad M \cdot \iota (\mathbf{e}_G) \, \mapsto \, M (\delta) 
\end{displaymath}
\begin{displaymath}
\eta : \quad f \, \mapsto \, \left( h( - ) \, \mapsto \, f( - ) \sum_{g \in G} h(g) \right),
\end{displaymath}
where $M \in C(G,H,A)$, $f \in \Fun_H(G,A \mathbf{e}_H)$ and $h \in
\Fun_H(G,A)$. After fixing left coset representatives of $H$ in $G$, a
direct calculation shows that $\eta$ is both a left and right inverse
to $\zeta$. The $G$-equivariance of $\zeta$ is clear since
\begin{displaymath}
g \cdot \zeta( M \iota(\mathbf{e}_G)) = g \cdot M(\delta) = \iota(g)
(M(\delta)) = (\iota(g) M)(\delta) = \zeta ( g \cdot M \iota(
\mathbf{e}_G))
\end{displaymath}
The $Z(A)$-equivariance of $\zeta$ is similarly clear. 
\end{proof}

\subsection{Completing the rational Cherednik algebra}

For each point $b \in \mathfrak{h}$, the completion, $\widehat{H}_{1,\mathbf{c}}(W)_b$, at the orbit $W \cdot b \in \mathfrak{h}/W$ of $H_{1,\mathbf{c}}(W)$ is defined in \cite[2.4]{108}. However, the notion of completion at $W \cdot b$ works equally well when $t = 0$ because $H_{0,\mathbf{c}}(W)$ can be thought of as a sheaf of algebras on the affine variety $\mathfrak{h}/W$. Therefore, if $\C[[\mathfrak{h}/W]]_b$ denotes the completion of $\C[\mathfrak{h}/W]$ at $W \cdot b$, we define the completion of $H_{0,\mathbf{c}}(W)$ at $b$ to be
\begin{equation}\label{eq:complete}
\widehat{H}_{0,\mathbf{c}}(W)_b := \C[[\mathfrak{h}/W]]_b \otimes_{\C[\mathfrak{h}/W]} H_{0,\mathbf{c}}(W)
\end{equation}
Crucially, we note that \cite[Theorem 3.2]{108} is independent of the parameter $t$ and hence can be applied to the case $t = 0$. We state it here for completeness.
\begin{thm}[\cite{108}, Theorem 3.2]\label{thm:BEiso}
Let $b \in \mathfrak{h}$, and define $\mathbf{c}'$ to be the restriction of $\mathbf{c}$ to the set $S_b$ of reflections in $W_b$. Then one has an isomorphism 
\begin{equation}\label{eq:BEiso}
\theta : \widehat{H}_{t,\mathbf{c}} (W,\mathfrak{h})_b \rightarrow C(W,W_b,\widehat{H}_{t,\mathbf{c}'}(W_b,\mathfrak{h})_0),
\end{equation}
defined by the following formulas. Suppose that $f \in P = Fun_{W_b}(W,\widehat{H}_{t,\mathbf{c}'}(W_b,\mathfrak{h})_0)$. Then
\begin{displaymath}
(\theta(u)f)(w) = f(wu),u \in W;
\end{displaymath}
for any $\alpha \in \mathfrak{h}^*$,
\begin{displaymath}
(\theta(x_{\alpha})f)(w) = (x_{w\alpha}^{(b)} + (w\alpha,b))f(w),
\end{displaymath}
where $x_{\alpha} \in \mathfrak{h}^* \subset H_{t,\mathbf{c}}(W,\mathfrak{h}),x_{w\alpha}^{(b)} \in H_{t,\mathbf{c}'}(W_b,\mathfrak{h})$; and for any $a \in \mathfrak{h}$,
\begin{displaymath}
(\theta(y_a)f)(w) = y_{wa}^{(b)}f(w) + \sum_{s \in S:s \notin W_b} \frac{2c_s}{1 - \lambda_s} \frac{\alpha_s(wa)}{x_{\alpha_s}^{(b)} + \alpha_s(b)}(f(sw) - f(w)).
\end{displaymath}
where $y_a \in \mathfrak{h} \subset H_{t,\mathbf{c}}(W,\mathfrak{h})$ and $y_a^{(b)}$ the same vector considered now as an element of $H_{t,\mathbf{c}'}(W_b,\mathfrak{h})$.
\end{thm}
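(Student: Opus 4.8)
The plan is to prove Theorem~\ref{thm:BEiso} in two movements: first that the stated formulas define an algebra homomorphism $\theta$ at all, and then that $\theta$ is bijective by a Poincar\'e--Birkhoff--Witt argument on associated graded algebras that is visibly independent of $t$. Write $A := \widehat{H}_{t,\mathbf{c}'}(W_b,\mathfrak{h})_0$, so that $C(W,W_b,A) = \End_A(P)$ with $P = \Fun_{W_b}(W,A)$. Before touching the relations one must check that $\theta(x_\alpha)$, $\theta(y_a)$ and $\theta(u)$ genuinely land in $C(W,W_b,A)$, i.e.\ that they are $A$-linear endomorphisms of $P$ respecting $W_b$-invariance. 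The decisive point is that the correction term in $\theta(y_a)$ makes sense: for $s \in S$ with $s \notin W_b$ the point $b$ does not lie on the reflection hyperplane of $s$, so $\alpha_s(b) \in \C^*$, while $x^{(b)}_{\alpha_s}$ lies in the maximal ideal of the completion at $0$; hence $x^{(b)}_{\alpha_s} + \alpha_s(b)$ is a unit in $A$ and the quotient is defined. This is exactly why completing at the orbit is essential. One then verifies that, although a single summand $f \mapsto f(sw)$ need not preserve $W_b$-invariance, the full sum over $s \notin W_b$ does, using that conjugation by $v \in W_b$ permutes $S \setminus W_b$ and transforms the data $(\alpha_s, \alpha_s(b), \lambda_s)$ compatibly; $A$-linearity is immediate since every operator is built from left multiplications and left translations, which commute with the right $A$-action.

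Next I would check that $\theta$ respects the relations (\ref{eq:rel}). The vanishing $[\theta(x_1),\theta(x_2)] = 0$ and the covariance $\theta(u)\theta(x_\alpha)\theta(u)^{-1} = \theta(x_{u\alpha})$, together with its $y$-analogue, are formal consequences of the diagonal and right-translation shape of the formulas. The substance is the mixed relation. Expanding $[\theta(x_\alpha),\theta(y_a)]f(w)$ produces a diagonal contribution $[x^{(b)}_{w\alpha}, y^{(b)}_{wa}]f(w)$, which by the commutation relation inside the parabolic algebra $A$ (and the equality $\mathbf{c}' = \mathbf{c}|_{S_b}$) yields the reflection terms indexed by the reflections of $w^{-1}W_bw$; and an off-diagonal contribution
\begin{displaymath}
\sum_{s \notin W_b} \frac{2c_s}{1-\lambda_s}\frac{\alpha_s(wa)}{x^{(b)}_{\alpha_s}+\alpha_s(b)}\Big[\big(x^{(b)}_{w\alpha}+(w\alpha,b)\big) - \big(x^{(b)}_{sw\alpha}+(sw\alpha,b)\big)\Big] f(sw).
\end{displaymath}
Because $1 - s$ has rank one on $\mathfrak{h}^*$, the vector $(1-s)(w\alpha)$ is a scalar multiple of $\alpha_s$, so the bracket equals that same scalar times $x^{(b)}_{\alpha_s}+\alpha_s(b)$; this cancels the denominator and collapses the correction into a scalar operator. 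Reindexing by $r = w^{-1}sw$ and invoking $\theta(r)f(w) = f(wr) = f(sw)$, together with the $W$-invariance of $\mathbf{c}$ and the covariance of roots and coroots, identifies this with the reflection terms indexed by the reflections \emph{not} in $w^{-1}W_bw$. The two contributions are thus complementary and reassemble into the single sum $-\sum_{s \in S}c_s(a,\alpha_s)(\alpha_s^\vee,\alpha)\,\theta(s)$ required by (\ref{eq:rel}).

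To see that $\theta$ is an isomorphism I would argue on associated graded algebras, which sidesteps the failure of faithfulness of the polynomial representation at $t = 0$ and makes the argument uniform in $t$. Filter both sides by the number of tensor factors drawn from $\mathfrak{h}$, placing $\mathfrak{h}^*$, $W$ and the completed scalars in degree zero. The formulas show $\theta$ is filtered: the correction term in $\theta(y_a)$ has $y$-degree zero, so the leading symbol of $\theta(y_a)$ is the diagonal operator $\mathrm{diag}_w\big(y^{(b)}_{wa}\big)$. On associated graded both algebras become completions of $\C[\mathfrak{h}\oplus\mathfrak{h}^*]\rtimes(-)$, and $\mathrm{gr}(\theta)$ reduces to the geometric assertion that the formal neighbourhood of the orbit $W\cdot b$ in $\mathfrak{h}$ splits, through its finitely many distinct points $wb$ and translation of each to the origin, as a product over $W/W_b$ of formal neighbourhoods of $0$ for $W_b$ --- precisely the data recorded by the size-$|W/W_b|$ centralizer algebra (cf.\ the computation of Lemma~\ref{lem:centralizeriso}). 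Hence $\mathrm{gr}(\theta)$ is an isomorphism, and since the filtrations are exhaustive and bounded below, $\theta$ is an isomorphism as well.

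The main obstacle is the mixed-commutator computation of the second movement, and within it the precise bookkeeping of the group action: one must track the left translation $f(sw)$ appearing in the Dunkl correction against the right translation $\theta(s)\colon f \mapsto f(ws)$ carried by $\iota$, and confirm that after the denominator cancellation the conjugation $s \mapsto w^{-1}sw$ reassembles the diagonal and off-diagonal pieces into the full reflection sum over $S$. Getting the scalar $\tfrac{2c_s}{1-\lambda_s}$, the normalization $\alpha_s(\alpha_s^\vee) = 2$ and the eigenvalue $\lambda_s$ to conspire to the correct coefficient $c_s(a,\alpha_s)(\alpha_s^\vee,\alpha)$ for genuinely complex reflections is the delicate point; the remaining relation $[\theta(y_a),\theta(y_{a'})] = 0$ is the usual integrability of Dunkl operators transported to the centralizer setting, and everything else is either formal or inherited from the parabolic algebra $A$.
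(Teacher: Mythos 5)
You should know at the outset that the paper contains no proof of Theorem \ref{thm:BEiso}: it is quoted verbatim, with attribution, from Bezrukavnikov--Etingof, and the only original content surrounding it in this paper is the definition (\ref{eq:complete}) of the completion at $t=0$ together with the remark that the cited theorem is independent of $t$ and so may be invoked there. Your proposal can therefore only be measured against the argument in the cited source, not against anything in this paper; measured that way, it is a faithful and structurally sound reconstruction. The pillars are all correct: $\alpha_s(b)\neq 0$ for $s\notin W_b$ makes $x^{(b)}_{\alpha_s}+\alpha_s(b)$ a unit in the completion at $0$ (which is exactly why one completes); conjugation by $v\in W_b$ permutes the summands of the correction term compatibly, so the sum preserves $\Fun_{W_b}(W,-)$; in the mixed relation, the rank-one identity $(1-s)(w\alpha)\in\C\,\alpha_s$ cancels the denominator, and the reindexing $r=w^{-1}sw$, together with $s'f(w)=f(s'w)$ for $s'\in W_b$, splits $S$ into reflections conjugate into $W_b$ (diagonal terms, inherited from the relation (\ref{eq:rel}) inside the parabolic algebra with parameter $\mathbf{c}'=\mathbf{c}|_{S_b}$) and the rest (off-diagonal terms). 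The bijectivity argument is also legitimate and, as you say, uniform in $t$: by PBW the completed algebra is $\C[[\mathfrak{h}]]_{W\cdot b}\otimes\C W\otimes\C[\mathfrak{h}^*]$ as a vector space, so the $y$-filtration is exhaustive and bounded below, $\theta$ is filtered because the correction term has $y$-degree zero, and $\mathrm{gr}(\theta)$ is the $\mathbf{c}=0$ statement that the formal neighbourhood of the orbit $W\cdot b$ decomposes into $|W/W_b|$ translates of the formal neighbourhood of $0$ --- the same decomposition that underlies Lemma \ref{lem:centralizeriso}.

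There is, however, one genuine gap. The relation $[\theta(y_a),\theta(y_{a'})]=0$ cannot be dismissed as ``the usual integrability of Dunkl operators transported to the centralizer setting'', because nothing you have set up performs that transport: the operators $\theta(y_a)$ are not the standard Dunkl operators (their diagonal part is $y^{(b)}_{wa}$, an honest generator of the parabolic algebra, not a derivative), and the cross-terms between the diagonal parts and the reflection-difference parts do not cancel termwise. This check is a computation of the same weight as the mixed relation you did outline, and it must either be done directly or be obtained by embedding both sides into a localized algebra (inverting the $\alpha_s$ for $s\in S_b$ as well, where $\theta(y_a)$ does become identified with a completed full Dunkl operator for $(W,\mathfrak{h})$ --- an embedding that exists for every $t$, including $t=0$). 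As written, your second movement establishes only part of what is needed for $\theta$ to exist as a homomorphism. A second, minor caveat: your scalar bookkeeping in the coefficient $\frac{2c_s}{1-\lambda_s}$ will come out with signs and powers of $\lambda_s$ that depend on conventions (whether $\lambda_s$ is the nontrivial eigenvalue on $\mathfrak{h}$ or on $\mathfrak{h}^*$, and whether the relation is written for $[x,y]$ or $[y,x]$); you rightly flag this as the delicate point, and it is where a blind execution most often fails, but it is a matter of care rather than of substance.
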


\section{The Etingof-Ginzburg Sheaf}\label{sec:main}

Let $Z_{\mathbf{c}}(W)$ denote the centre of $H_{0,\mathbf{c}}(W)$ and $X_{\mathbf{c}}(W) = \textrm{maxspec}(Z_{\mathbf{c}}(W))$, the corresponding affine
variety. The space $X_{\mathbf{c}}(W)$ is called the \textit{generalized Calogero-Moser space} associated to the complex reflection group $W$ at the parameter $\mathbf{c}$. For $b \in \mathfrak{h}$, the maximal ideal of $\C[\mathfrak{h}/W]$ corresponding to $W \cdot b$ will be written $\mathfrak{m}(b)$ and the two-sided ideal of $H_{0,\mathbf{c}}(W)$ generated by the elements of $\mathfrak{m}(b)$ will be denoted $\langle \mathfrak{m}(b) \rangle$. Similarly, the maximal ideal of $\C[\mathfrak{h}/W_b]$ corresponding to $W_b \cdot p$, $p \in \mathfrak{h}$, will be written $\mathfrak{n}(p)$.\\

\noindent Let $\mathcal{A}$ denote the set of reflecting hyperplanes of $W$ in $\mathfrak{h}$ and, for each $H \in \mathcal{A}$, let $L_H \in \mathfrak{h}^*$ be a linear functional whose kernel is $H$ (e.g. $\alpha_s \in \mathfrak{h}^*$ if $s$ is a reflection about $H$). Choose homogeneous algebraically independent generators $F_1, \dots F_n$ of $\C[\mathfrak{h}]^W$ and $P_1, \dots , P_n$ of $\C[\mathfrak{h}]^{W_b}$. The following description of the Jacobian is due to Steinberg, \cite[Lemma]{4}.
\begin{equation}\label{eq:det}
\Pi_W := \det \left ( \frac{\partial F_i}{\partial x_j} \right) = k \prod_{H \in \mathcal{A}} L_H^{e_H - 1}
\end{equation}
where $e_H$ is the order of the cyclic group $W_H$ of elements of $W$ that fix $H$ pointwise and $k$ a non-zero scalar. 

\begin{lem}\label{lem:auto}
For each $b \in \mathfrak{h}$ the map $\Psi : \C[[\mathfrak{h}/W_b]]_0 \longrightarrow \C[[\mathfrak{h}/W_b]]_0$ defined by
\begin{displaymath}
P_i(\mathbf{x}) \mapsto F_i(\mathbf{x} + b) - F_i(b)
\end{displaymath}
is an automorphism.
\end{lem}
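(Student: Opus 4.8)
The plan is to recognize $\Psi$ as a continuous local endomorphism of the \emph{regular} complete local ring $R = \C[[\mathfrak{h}/W_b]]_0$ and to prove it is invertible by checking that its linear part is nondegenerate, the nondegeneracy being forced by Steinberg's factorization \eqref{eq:det}. First I would record that, since $W_b$ is a complex reflection group, $\C[\mathfrak{h}]^{W_b} = \C[P_1,\dots,P_n]$ is a polynomial ring, so $R \cong \C[[P_1,\dots,P_n]]$ is regular with maximal ideal $\mathfrak{n}(0) = (P_1,\dots,P_n)$. Next I would check that $\Psi$ is well defined: because $F_i$ is $W$-invariant and $w b = b$ for all $w \in W_b$, the function $\mathbf{x}\mapsto F_i(\mathbf{x}+b)$ is $W_b$-invariant, so $F_i(\mathbf{x}+b)-F_i(b)\in\C[\mathfrak{h}]^{W_b}$; it lies in $\mathfrak{n}(0)$ since it vanishes at $\mathbf{x}=0$. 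Thus $\Psi$ extends uniquely to a continuous $\C$-algebra endomorphism of $R$ carrying $\mathfrak{n}(0)$ into itself.

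Since $R$ is a power series ring, such an endomorphism is an automorphism if and only if the induced map on the cotangent space $\mathfrak{n}(0)/\mathfrak{n}(0)^2$ is bijective (the formal inverse function theorem). Writing $F_i(\mathbf{x}+b)-F_i(b)=G_i(P_1,\dots,P_n)$, the basis $\{\overline{P}_j\}$ of $\mathfrak{n}(0)/\mathfrak{n}(0)^2$ identifies this map with the matrix $(\partial G_i/\partial P_j)(0)$, so it suffices to show $\det(\partial G_i/\partial P_j)(0)\neq 0$. I would obtain this determinant by the chain rule: differentiating the identity $G_i(P(\mathbf{x}))=F_i(\mathbf{x}+b)-F_i(b)$ in the $x_j$ and taking determinants yields, as an equality in $\C[\mathfrak{h}]$, $\Pi_W(\mathbf{x}+b)=\det(\partial G_i/\partial P_k)(P(\mathbf{x}))\cdot \Pi_{W_b}(\mathbf{x})$, where $\Pi_{W_b}=\det(\partial P_i/\partial x_j)$.

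The main obstacle is that one cannot simply evaluate this identity at $\mathbf{x}=0$: since $\Pi_{W_b}(0)=0$ (the projection $\mathfrak{h}\to\mathfrak{h}/W_b$ ramifies at the $W_b$-fixed point $0$), one obtains only the useless relation $0 = (\,\cdot\,)\cdot 0$. This is resolved by Steinberg's formula \eqref{eq:det}: writing $L_H(\mathbf{x}+b)=L_H(\mathbf{x})+L_H(b)$ and splitting the product over $\mathcal{A}$ according to whether $b\in H$, the factors with $b\in H$ are exactly $L_H(\mathbf{x})^{e_H-1}$, and their product is, up to a nonzero scalar, $\Pi_{W_b}(\mathbf{x})$ — here one uses that the reflecting hyperplanes of $W_b$ are precisely those $H\in\mathcal{A}$ containing $b$ (a reflection $s\in W$ fixes $b$ iff $b\in\ker(s-1)$), together with Steinberg's formula applied to $W_b$ itself. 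Cancelling the common nonzero factor $\Pi_{W_b}(\mathbf{x})$ in the integral domain $\C[\mathfrak{h}]$ then leaves $\det(\partial G_i/\partial P_k)(P(\mathbf{x}))=\tfrac{k}{k'}\prod_{H\in\mathcal{A},\, b\notin H}(L_H(\mathbf{x})+L_H(b))^{e_H-1}$ for suitable scalars $k,k'$.

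Finally, setting $\mathbf{x}=0$ (so that $P(\mathbf{x})=0$, as each $P_i$ is homogeneous of positive degree) gives $\det(\partial G_i/\partial P_k)(0)=\tfrac{k}{k'}\prod_{H\in\mathcal{A},\, b\notin H}L_H(b)^{e_H-1}$, which is nonzero because $L_H(b)\neq 0$ precisely for those $H$ with $b\notin H$. Hence the induced cotangent map is invertible and $\Psi$ is an automorphism. The essential idea is that the ramification of $\pi_{W_b}$ at $0$ (which kills the Jacobian $\Pi_{W_b}$) is cancelled \emph{exactly} against the factors of $\Pi_W(\mathbf{x}+b)$ coming from hyperplanes through $b$, leaving only the factors over hyperplanes avoiding $b$, which are manifestly invertible at $b$.
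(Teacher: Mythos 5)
Your proposal is correct and follows essentially the same route as the paper: express $F_i(\mathbf{x}+b)-F_i(b)$ in terms of the $P_j$, apply the chain rule and Steinberg's Jacobian formula for both $W$ and $W_b$, cancel the common factor $\prod_{H \ni b} L_H^{e_H-1}(\mathbf{x})$, evaluate at $\mathbf{x}=0$ to get $\prod_{H \not\ni b} L_H(b)^{e_H-1} \neq 0$, and conclude by the formal inverse function theorem (the paper cites Eisenbud, Exercise 7.25). Your additional remarks on well-definedness and on why one cannot naively evaluate at $\mathbf{x}=0$ are correct elaborations of steps the paper leaves implicit.
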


\begin{proof}
Since $F_i(\mathbf{x} + b) - F_i(b) \in \mathfrak{n}(0)$ for all $i$ there exist polynomials $Q_1 , \dots ,Q_n$ such that $F_i(\mathbf{x} + b) - F_i(b) = Q_i(P_1, \dots, P_n)$. The chain rule gives
\begin{displaymath}
D := \det \left( \frac{\partial(F_i( \mathbf{x} + b) - F_i(b))}{\partial x_j} \right) = \det  \left( \frac{\partial Q_i}{\partial P_k} \right) \det  \left( \frac{\partial P_k}{\partial x_j} \right)
\end{displaymath}
However, $D = \Pi_W(\mathbf{x} + b)$ and this gives 
\begin{displaymath}
\prod_{H \in \mathcal{A}} L_H^{e_H - 1}(\mathbf{x} + b) = \det  \left( \frac{\partial Q_i}{\partial P_k} \right) \prod_{H \in \mathcal{A} \textrm{ with } b \in H} L_H^{e_H - 1}(\mathbf{x})
\end{displaymath}
Since $L_H(\mathbf{x} + b) = L_H(\mathbf{x})$ if and only if $b \in H$, we get
\begin{displaymath}
\det  \left( \frac{\partial Q_i}{\partial P_k} \right) = \prod_{H \in \mathcal{A} \textrm{ with } b \notin H} L_H^{e_H - 1}(\mathbf{x} + b)
\end{displaymath}
and
\begin{displaymath}
\det  \left( \frac{\partial Q_i}{\partial P_k} \right) (0)  = \prod_{H \in \mathcal{A} \textrm{ with } b \notin H} L_H^{e_H - 1}(b) \neq 0.
\end{displaymath}
Hence, by \cite[Exercise 7.25]{5}, $\Psi$ is an isomorphism.
\end{proof}

As a consequence of Theorem \ref{thm:BEiso}, we have an isomorphism of quotient algebras.
\begin{cor}\label{cor:quoiso}
Let $\theta : \widehat{H}_{0,\mathbf{c}} (W,\mathfrak{h})_b \rightarrow C(W,W_b,\widehat{H}_{0,\mathbf{c}'}(W_b,\mathfrak{h})_0)$ be the isomorphism (\ref{eq:BEiso}). Then $\theta$ descends to an isomorphism
\begin{equation}\label{eq:quoiso}
\theta : \frac{H_{0,\mathbf{c}}(W,\mathfrak{h})}{\langle \mathfrak{m}(b) \rangle} \stackrel{\sim}{\longrightarrow} C \left( W,W_b,\frac{H_{0,\mathbf{c}'}(W_b,\mathfrak{h})}{\langle \mathfrak{n}(0)\rangle} \right) .
\end{equation}
\end{cor}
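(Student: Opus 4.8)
The plan is to deduce the descent from Theorem~\ref{thm:BEiso} by reducing both sides modulo their maximal ideals and exploiting that, at $t=0$, the subalgebra $\C[\mathfrak{h}/W]=\C[\mathfrak{h}]^W$ is central in $H_{0,\mathbf{c}}(W,\mathfrak{h})$ (this centrality is exactly what produces $\pi_W$). Because $\mathfrak{m}(b)$ is central one has $\langle\mathfrak{m}(b)\rangle=\mathfrak{m}(b)\,H_{0,\mathbf{c}}(W,\mathfrak{h})$, and since $\C[[\mathfrak{h}/W]]_b\big/\mathfrak{m}(b)\C[[\mathfrak{h}/W]]_b=\C[\mathfrak{h}/W]/\mathfrak{m}(b)=\C$, tensoring the defining presentation (\ref{eq:complete}) shows
\[
\frac{\widehat{H}_{0,\mathbf{c}}(W,\mathfrak{h})_b}{\mathfrak{m}(b)\,\widehat{H}_{0,\mathbf{c}}(W,\mathfrak{h})_b}\;\simeq\;\frac{H_{0,\mathbf{c}}(W,\mathfrak{h})}{\langle\mathfrak{m}(b)\rangle},
\]
and, applied to $W_b$ and $\mathfrak{n}(0)$, that $A/\mathfrak{n}(0)A\simeq H_{0,\mathbf{c}'}(W_b,\mathfrak{h})/\langle\mathfrak{n}(0)\rangle$, where I abbreviate $A:=\widehat{H}_{0,\mathbf{c}'}(W_b,\mathfrak{h})_0$. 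It therefore suffices to prove that $\theta$ carries the ideal generated by $\mathfrak{m}(b)$ onto the ideal generated by $\mathfrak{n}(0)$ inside $C(W,W_b,A)$.

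The key computation is the image of $\C[\mathfrak{h}/W]$ under $\theta$. Writing $\mathfrak{m}(b)=(F_1-F_1(b),\dots,F_n-F_n(b))$ and using that $\theta$ is an algebra map together with the formula $(\theta(x_\alpha)f)(w)=(x^{(b)}_{w\alpha}+(w\alpha,b))f(w)$, one finds that $\theta(F_i)$ acts diagonally, multiplying $f(w)$ by the element obtained from $F_i$ through the substitution $x_\alpha\mapsto x^{(b)}_{w\alpha}+(w\alpha,b)$. Since each $F_i$ is $W$-invariant this element equals $F_i(\mathbf{x}^{(b)}+b)$, independently of $w$ (here $\mathbf{x}^{(b)}$ are the $\mathfrak{h}^*$-coordinates of $H_{0,\mathbf{c}'}(W_b,\mathfrak{h})$, matching the variable $\mathbf{x}$ of Lemma~\ref{lem:auto}). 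Thus under the realization $C(W,W_b,A)\simeq M_{|W/W_b|}(A)$ afforded by a choice of coset representatives, $\theta(F_i)$ is the scalar matrix with entry the central element $F_i(\mathbf{x}^{(b)}+b)\in Z(A)$, so $\theta$ sends the ideal generated by $\mathfrak{m}(b)$ to the ideal of $C(W,W_b,A)$ generated by the central elements $F_i(\mathbf{x}^{(b)}+b)-F_i(b)$, $i=1,\dots,n$.

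This is precisely where Lemma~\ref{lem:auto} enters. Since $b$ is $W_b$-fixed, each $F_i(\mathbf{x}+b)-F_i(b)$ lies in $\C[\mathfrak{h}/W_b]$ and vanishes at $0$, and the lemma asserts that $P_i\mapsto F_i(\mathbf{x}+b)-F_i(b)$ extends to an automorphism $\Psi$ of $\C[[\mathfrak{h}/W_b]]_0$. An automorphism of this local ring preserves its maximal ideal, so the elements $F_i(\mathbf{x}+b)-F_i(b)=\Psi(P_i)$ generate that maximal ideal, and hence generate the ideal $\mathfrak{n}(0)A$ of $A$. Therefore $\theta(\mathfrak{m}(b))$ and $\mathfrak{n}(0)$ generate the same two-sided ideal of $C(W,W_b,A)$; here one uses that $\mathfrak{n}(0)$ is central in $A$, so that under the matrix description this ideal is simply $M_{|W/W_b|}(\mathfrak{n}(0)A)$. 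Passing the quotient through the matrix functor then gives $C(W,W_b,A)/\langle\mathfrak{n}(0)\rangle\simeq C(W,W_b,A/\mathfrak{n}(0)A)$, and combining with the two reductions of the first paragraph yields the isomorphism (\ref{eq:quoiso}). The step I expect to demand the most care is the second one: verifying that $\theta(F_i)$ is genuinely diagonal multiplication by the single central element $F_i(\mathbf{x}^{(b)}+b)$, so that the ideal it generates is exactly $M_{|W/W_b|}(\mathfrak{n}(0)A)$ once Lemma~\ref{lem:auto} is invoked; the completion-insensitivity and the functoriality of the centralizer construction are then formal.
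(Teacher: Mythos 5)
Your proposal is correct and follows essentially the same route as the paper: the key computation that $\theta(F_i)$ acts diagonally by the central element $F_i(\mathbf{x}+b)$ (using $W$-invariance to kill the $w$-dependence), followed by Lemma \ref{lem:auto} to identify the ideal generated by the elements $F_i(\mathbf{x}+b)-F_i(b)$ with $\mathfrak{n}(0)\C[[\mathfrak{h}/W_b]]_0$, is exactly the paper's argument. Your opening reduction identifying the quotients of the completed algebras with the quotients of the uncompleted ones is left implicit in the paper but is a worthwhile thing to have spelled out.
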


\begin{proof}
For $a \in \mathfrak{h}$, $\alpha \in \mathfrak{h}^*$ and $w \in W$, $(x_{w \cdot \alpha} + (w \alpha,b))(a) = (w \alpha,a) + (w \alpha,b) = (w \cdot x_{\alpha})(a + b)$. Therefore $\theta(g)(f(w)) = (w \cdot g)(\mathbf{x} + b)f(w) = g(\mathbf{x} + b)f(w)$ for all $g \in \C[\mathfrak{h}]^W \subset \C[[\mathfrak{h}]]_b$ and $f \in \Fun_{W_b}(W,\widehat{H}_c(W_b,\mathfrak{h})_0)$. Now choose $u \in W_b$, then 
\begin{displaymath}
u \cdot g( \mathbf{x} + b) = g( u^{-1} \cdot \mathbf{x} + b) = g( u^{-1} \cdot (\mathbf{x} + b)) = g(\mathbf{x} + b)
\end{displaymath}
shows that $g(\mathbf{x} + b) \in \C[\mathfrak{h}]^{W_b}$. Hence, if $g \in \mathfrak{m}(b) \lhd \C[\mathfrak{h}]^W$, then $g(\mathbf{x} + b) \in \mathfrak{n}(0) \lhd \C[\mathfrak{h}]^{W_b}$. This shows that $\theta(g)f(w) \in \mathfrak{n}(0)\widehat{H}_{t,\mathbf{c}'}(W_b)_0$ and 
\begin{equation}\label{eq:inc}
\theta(\mathfrak{m}(b)\widehat{H}_{t,\mathbf{c}} (W,\mathfrak{h})_b) \subseteq C(W,W_b,\mathfrak{n}(0)\widehat{H}_{t,\mathbf{c}'}(W_b,\mathfrak{h})_0).
\end{equation}
The ideal $\mathfrak{m}(b)$ in $\C[\mathfrak{h}]^W$ is generated by $F_1(\mathbf{x}) - F_1(b), \dots , F_n(\mathbf{x}) - F_n(b)$ and we have $\theta(F_i(\mathbf{x}) - F_i(b))f(w) = (F_i(\mathbf{x} + b) - F_i(b))f(w)$. The statement of Lemma \ref{lem:auto} is equivalent to the fact that 
\begin{displaymath}
\{ F_1(\mathbf{x} + b) - F_1(b), \dots , F_n(\mathbf{x} + b) - F_n(b)) \} \cdot \C[[\mathfrak{h}/W_b]]_0 = \mathfrak{n}(0) \C[[\mathfrak{h}/W_b]]_0.
\end{displaymath}
This, together with (\ref{eq:inc}), implies that 
\begin{displaymath}
\theta(\mathfrak{m}(b)\widehat{H}_{t,\mathbf{c}} (W,\mathfrak{h})_b) = C(W,W_b,\mathfrak{n}(0)\widehat{H}_{t,\mathbf{c}'}(W_b,\mathfrak{h})_0).
\end{displaymath}
and the isomorphism follows. 
\end{proof}

By \cite[Proposition 4.15]{1}, $\C[\mathfrak{h}]^W$ is contained in the centre of $H_{0,\mathbf{c}}(W,\mathfrak{h})$ and the embedding defines a surjective morphism $\pi_{W} : X_{\mathbf{c}}(W) \twoheadrightarrow \mathfrak{h}/W$. The algebra $Z_{0,\mathbf{c}}(W)/ \langle \mathfrak{m}(b) \rangle$ is the coordinate ring of the scheme-theoretic pull-back $\pi_{W}^{-1}(b)$. Comparing the centres of the algebras in Corollary \ref{cor:quoiso} gives an isomorphism of (non-reduced) schemes.

\begin{cor}\label{cor:geo}
For $b \in \mathfrak{h}$, there is a scheme-theoretic isomorphism
\begin{equation}\label{eq:geo}
\Phi : \pi^{-1}_{W}(b) \stackrel{\sim}{\longrightarrow} \pi^{-1}_{W_b}(0)
\end{equation}
\end{cor}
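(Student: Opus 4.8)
The plan is to construct $\Phi$ as the morphism of affine schemes associated to the restriction of the algebra isomorphism $\theta$ of Corollary \ref{cor:quoiso} to centres. Since any isomorphism of $\C$-algebras carries the centre of the source isomorphically onto the centre of the target, Corollary \ref{cor:quoiso} at once produces an isomorphism
\[
Z\!\left( \frac{H_{0,\mathbf{c}}(W,\mathfrak{h})}{\langle\mathfrak{m}(b)\rangle} \right) \stackrel{\sim}{\longrightarrow} Z\!\left( C\!\left( W, W_b, \frac{H_{0,\mathbf{c}'}(W_b,\mathfrak{h})}{\langle\mathfrak{n}(0)\rangle} \right) \right).
\]
Everything then reduces to identifying each of these two centres with the coordinate ring of the relevant fibre, after which $\Phi$ is the induced map of spectra.

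The right-hand centre is the more transparent. Writing $A := H_{0,\mathbf{c}'}(W_b,\mathfrak{h})/\langle\mathfrak{n}(0)\rangle$ and fixing left coset representatives of $W_b$ in $W$ as in the centralizer construction, $C(W,W_b,A)$ is the full matrix algebra $M_{|W/W_b|}(A)$, whose centre is the algebra of scalar matrices, canonically $Z(A)$. Thus the right-hand side becomes $Z(A) = Z(H_{0,\mathbf{c}'}(W_b,\mathfrak{h})/\langle\mathfrak{n}(0)\rangle)$, and both sides are now instances of one assertion: for a rational Cherednik algebra at $t=0$ and a point of $\mathfrak{h}$, the centre of the fibre algebra equals the image of the full centre, i.e.
\[
Z\!\left( H_{0,\mathbf{c}}(W)/\langle\mathfrak{m}(b)\rangle \right) = Z_{\mathbf{c}}(W)/\langle\mathfrak{m}(b)\rangle ,
\]
and similarly for $W_b$ at $0$. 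Granting this, the displayed isomorphism of centres becomes an isomorphism of the coordinate rings of $\pi_W^{-1}(b)$ and $\pi_{W_b}^{-1}(0)$, and $\Phi$ is its spectrum.

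To prove this identity I would use the PBW theorem: $H_{0,\mathbf{c}}(W)$ is a free module of finite rank $N$ over its central bi-invariant subalgebra $\mathcal{B} := \C[\mathfrak{h}]^W \otimes_{\C} \C[\mathfrak{h}^*]^W$. Fixing a $\mathcal{B}$-basis $h_1,\dots,h_N$, the centre is the kernel of the $\mathcal{B}$-linear commutator map $\phi : H_{0,\mathbf{c}}(W) \to H_{0,\mathbf{c}}(W)^{\oplus N}$, $h \mapsto ([h,h_1],\dots,[h,h_N])$, a map between finite free $\mathcal{B}$-modules. Passing to the fibre over $b$ is exactly the base change along $\mathcal{B} \to \mathcal{B}/\mathfrak{m}(b)\mathcal{B} \cong \C[\mathfrak{h}^*]^W$, and the centre of the fibre is the kernel of $\phi$ after this base change. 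The identity therefore amounts to the statement that $-\otimes_{\mathcal{B}} \C[\mathfrak{h}^*]^W$ preserves the left-exact sequence $0 \to Z_{\mathbf{c}}(W) \to H_{0,\mathbf{c}}(W) \xrightarrow{\phi} H_{0,\mathbf{c}}(W)^{\oplus N}$, equivalently that $\mathrm{Tor}_1^{\mathcal{B}}(\mathrm{im}\,\phi, \C[\mathfrak{h}^*]^W)=0$. The inclusion $\supseteq$ is automatic since $\C[\mathfrak{h}]^W$ is central; the content is the reverse, that no new central elements are created in the fibre.

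This reverse inclusion is the step I expect to be the main obstacle. I would attack it through the geometry of $X_{\mathbf{c}}(W)$: the projection $\pi_W$ is flat by miracle flatness, as $X_{\mathbf{c}}(W)$ is Cohen--Macaulay, $\mathfrak{h}/W$ is regular, and the fibres of $\pi_W$ are equidimensional; hence $Z_{\mathbf{c}}(W)$ is a flat $\C[\mathfrak{h}]^W$-module, which yields the injectivity $Z_{\mathbf{c}}(W)\otimes_{\C[\mathfrak{h}]^W}\C \hookrightarrow H_{0,\mathbf{c}}(W)/\langle\mathfrak{m}(b)\rangle$. The genuinely delicate input is that $\mathrm{coker}\,\phi$ is flat along $\mathfrak{m}(b)$, i.e. that the size of the centre is constant in the family; this controls the fibrewise centre and is where the detailed structure of the Calogero--Moser space (its normality and the generic Azumaya property of $H_{0,\mathbf{c}}(W)$ over $Z_{\mathbf{c}}(W)$) must be brought in. Running the same two inputs for $W_b$ at the point $0$ disposes of the right-hand side, and assembling the identifications gives the scheme isomorphism $\Phi$.
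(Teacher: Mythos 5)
Your overall strategy --- restrict the isomorphism of Corollary \ref{cor:quoiso} to centres and identify each centre with the coordinate ring of the corresponding fibre --- is different from the paper's, and it founders on exactly the step you flag as ``the main obstacle''. The identity
\begin{displaymath}
Z\!\left( H_{0,\mathbf{c}}(W)/\langle\mathfrak{m}(b)\rangle \right) \;=\; Z_{\mathbf{c}}(W)/\mathfrak{m}(b) Z_{\mathbf{c}}(W)
\end{displaymath}
is not established by your argument: you reduce it to the vanishing of a $\mathrm{Tor}_1$, equivalently to flatness of $\mathrm{coker}\,\phi$ along $\mathfrak{m}(b)$, and then state that this ``is where the detailed structure of the Calogero--Moser space \dots must be brought in'' without actually bringing it in. The cited geometric inputs do not obviously suffice: Cohen--Macaulayness and miracle flatness give flatness of $Z_{\mathbf{c}}(W)$ over $\C[\mathfrak{h}]^W$ (and hence the injectivity $Z_{\mathbf{c}}(W)/\mathfrak{m}(b)Z_{\mathbf{c}}(W)\hookrightarrow H_{0,\mathbf{c}}(W)/\langle\mathfrak{m}(b)\rangle$, which can in any case be had formally), but the surjectivity onto the centre of the fibre algebra is a statement about non-jumping of centres in a flat family of noncommutative algebras, and $X_{\mathbf{c}}(W)$ is in general singular with the Azumaya locus a proper open set, so the generic Azumaya property controls nothing over the closed fibre $\pi_W^{-1}(b)$. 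As written, this step is a genuine gap, and it is the whole content of the corollary once $\theta$ is in hand.

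The paper avoids this issue entirely by never touching the centre of the fibre algebra. It uses the Satake isomorphism $Z_{0,\mathbf{c}}(W)\stackrel{\sim}{\to}\mathbf{e}_W H_{0,\mathbf{c}}(W)\mathbf{e}_W$, $z\mapsto z\,\mathbf{e}_W$, which descends to the quotient by $\langle\mathfrak{m}(b)\rangle$ by a purely formal idempotent computation ($\mathfrak{m}(b)H\cap \mathbf{e}_W H\mathbf{e}_W = \mathbf{e}_W\mathfrak{m}(b)H\mathbf{e}_W$ because $\mathfrak{m}(b)$ is central), identifying $Z_{0,\mathbf{c}}(W)/\mathfrak{m}(b)Z_{0,\mathbf{c}}(W)$ with the \emph{spherical subalgebra} of the fibre algebra rather than its centre. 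It then invokes \cite[Lemma 3.1(ii)]{108}, which says that $\theta$ restricts to an isomorphism of spherical subalgebras with $\theta(\mathbf{e}_W)=\mathbf{e}_{W_b}$, and composes with the inverse Satake isomorphism for $(W_b,0)$. If you want to salvage your route through centres, you would either need to prove the non-jumping statement (essentially by deducing it from the factorization you are trying to prove), or replace the centre by the spherical subalgebra, whose behaviour under central quotients is formal --- which is precisely the paper's proof.
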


\begin{proof}
The Satake isomorphism, \cite[Theorem 3.1]{1}, is the map $Z_{0,\mathbf{c}}(W) \rightarrow \mathbf{e}_W \, H_{0,\mathbf{c}}(W,\mathfrak{h}) \, \mathbf{e}_W$ defined by $z \mapsto z \, \mathbf{e}_W$. Since $ \mathfrak{m}(b) H_{0,\mathbf{c}}(W,\mathfrak{h})$ is a centrally generated ideal in $H_{0,\mathbf{c}}(W,\mathfrak{h})$,
\begin{displaymath}
 \mathfrak{m}(b) H_{0,\mathbf{c}}(W,\mathfrak{h}) \cap  \mathbf{e}_W \, H_{0,\mathbf{c}}(W,\mathfrak{h}) \, \mathbf{e}_W = \langle \mathbf{e}_W \, \mathfrak{m}(b) \rangle,
\end{displaymath}
where the right-hand side is considered as an ideal in $\mathbf{e}_W \, H_{0,\mathbf{c}}(W,\mathfrak{h}) \, \mathbf{e}_W$. Therefore the Satake isomorphism descends to an isomorphism
\begin{equation}\label{eq:satake}
\textsf{S}_{W,b} \, : \, \frac{Z_{0,\mathbf{c}}(W)}{\mathfrak{m}(b) Z_{0,\mathbf{c}}(W)} \stackrel{\sim}{\longrightarrow} \mathbf{e}_W \left( \frac{H_{0,\mathbf{c}}(W,\mathfrak{h})}{ \langle \mathfrak{m}(b) \rangle } \right) \mathbf{e}_W .
\end{equation}
As noted in \cite[Lemma 3.1 (ii)]{108}, the isomorphism (\ref{eq:quoiso}) restricts to an isomorphism of subalgebras
\begin{equation}\label{eq:sphericaliso}
\theta \, : \, \mathbf{e}_W \left( \frac{H_{0,\mathbf{c}}(W,\mathfrak{h})}{ \langle \mathfrak{m}(b) \rangle } \right) \mathbf{e}_W \stackrel{\sim}{\longrightarrow} \mathbf{e}_{W_b} \left( \frac{H_{0,\mathbf{c}'}(W_b,\mathfrak{h})}{ \langle \mathfrak{n}(0) \rangle } \right) \mathbf{e}_{W_b},
\end{equation}
where $\theta( \, \mathbf{e}_W) = \mathbf{e}_{W_b}$. Here we have implicitly identified the spherical algebra on the right-hand side with a subalgebra of $C \left( W,W_b,\frac{H_{0,\mathbf{c}'}(W_b,\mathfrak{h})}{\langle \mathfrak{n}(0)\rangle} \right)$. It is possible, though uninformative, to give an explicit description of this identification. Combining the isomorphisms of (\ref{eq:satake}) and (\ref{eq:sphericaliso}) produces the comorphism 
\begin{equation}\label{eq:phiiso}
(\Phi^*)^{-1} = \textsf{S}^{-1}_{W_b,0} \circ \theta \circ S_{W,b} \, : \, \frac{Z_{0,\mathbf{c}}(W)}{\mathfrak{m}(b) Z_{0,\mathbf{c}}(W)}  \stackrel{\sim}{\longrightarrow} \frac{Z_{0,\mathbf{c}'}(W_b)}{\mathfrak{n}(0) Z_{0,\mathbf{c}'}(W_b)} 
\end{equation}
corresponding to $\Phi$.
\end{proof} 

Etingof and Ginzburg, \cite[page 247]{1}, introduced an important coherent sheaf on $X_{\mathbf{c}}(W)$, which we now recall.

\begin{defn}
The \textit{Etingof-Ginzburg sheaf} is the coherent sheaf $\mathcal{R}[W]$ on $X_{\mathbf{c}}(W)$ corresponding to the finitely generated $Z_{0,\mathbf{c}}(W)$-module $H_{0,\mathbf{c}}(W) \, \mathbf{e}_W$.
\end{defn}

The coordinate ring of a Zariski-open subset $U \subseteq X_{\mathbf{c}}(W)$ will be written $Z_{0,\mathbf{c}}(W)_{U}$. We now conclude;

\begin{thm}\label{thm:main}
Let $\mathcal{R} [ W  ]$ be the Etingof-Ginzburg sheaf on $X_{\mathbf{c}}(W)$ and $\mathcal{R} [ W_b ]$ the Etingof-Ginzburg sheaf on $X_{\mathbf{c}'}(W_b)$. For $b \in \mathfrak{h}/W$ we have an isomorphism of $W$-equivariant sheaves on $\pi^{-1}_{W_b}(0)$
\begin{equation}\label{eq:mainiso}
\Phi_* \left( \mathcal{R} [ W ]_{|_{\pi_{W}^{-1}(b)}} \right) \simeq \texttt{Ind}_{\, W_b}^{\, W} \mathcal{R} [ W_b ]_{|_{\pi_{W_b}^{-1}(0)}}.
\end{equation}
\end{thm}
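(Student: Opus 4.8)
The plan is to translate the statement into an isomorphism of modules over the relevant central subalgebras and then to read it off from Lemma \ref{lem:centralizeriso} applied to the centralizer presentation supplied by Corollary \ref{cor:quoiso}. Throughout write $\bar{H}_W = H_{0,\mathbf{c}}(W,\mathfrak{h})/\langle\mathfrak{m}(b)\rangle$ and $\bar{H}_{W_b} = H_{0,\mathbf{c}'}(W_b,\mathfrak{h})/\langle\mathfrak{n}(0)\rangle$. First I would identify the restricted sheaves with their modules of global sections. Since $\mathfrak{m}(b)$ lies in the centre, $\mathcal{R}[W]_{|_{\pi_W^{-1}(b)}}$ is the module $\bar{H}_W\mathbf{e}_W$, regarded as a module over $Z_{0,\mathbf{c}}(W)/\mathfrak{m}(b)Z_{0,\mathbf{c}}(W)$ through the Satake isomorphism $\textsf{S}_{W,b}$ of (\ref{eq:satake}) and the right action of the spherical subalgebra $\mathbf{e}_W\bar{H}_W\mathbf{e}_W$; likewise $\mathcal{R}[W_b]_{|_{\pi_{W_b}^{-1}(0)}}$ is $\bar{H}_{W_b}\mathbf{e}_{W_b}$ over $Z_{0,\mathbf{c}'}(W_b)/\mathfrak{n}(0)$ via $\textsf{S}_{W_b,0}$. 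Left multiplication by $W$ endows $\bar{H}_W\mathbf{e}_W$ with its $W$-equivariant structure (trivial on the base), and the $W_b$-equivariant structure on $\bar{H}_{W_b}\mathbf{e}_{W_b}$ arises the same way.

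Next I would apply the algebra isomorphism $\theta$ of Corollary \ref{cor:quoiso}. Because $\theta$ restricts to the embedding $\iota$ on $\C W$ (Theorem \ref{thm:BEiso}), one has $\theta(\mathbf{e}_W) = \iota(\mathbf{e}_W)$, so $\theta$ carries $\bar{H}_W\mathbf{e}_W$ onto $C(W,W_b,\bar{H}_{W_b})\,\iota(\mathbf{e}_W)$, compatibly with the left $W$-actions and, by (\ref{eq:sphericaliso}) together with $\theta(\mathbf{e}_W)=\mathbf{e}_{W_b}$, with the right actions of the spherical subalgebras. Lemma \ref{lem:centralizeriso}, applied with $G = W$, $H = W_b$ and $A = \bar{H}_{W_b}$, then gives an isomorphism of $\C W$-$Z(\bar{H}_{W_b})$-bimodules
\begin{displaymath}
C(W,W_b,\bar{H}_{W_b})\,\iota(\mathbf{e}_W) \;\simeq\; \texttt{Ind}_{\,W_b}^{\,W}\bar{H}_{W_b}\mathbf{e}_{W_b}.
\end{displaymath}
Composing the two displays yields $\bar{H}_W\mathbf{e}_W \simeq \texttt{Ind}_{\,W_b}^{\,W}\bar{H}_{W_b}\mathbf{e}_{W_b}$ as $\C W$-modules, which is the underlying isomorphism of the Theorem.

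It then remains to upgrade this to an isomorphism of $W$-equivariant sheaves on $\pi_{W_b}^{-1}(0)$, i.e. to check that the two module structures over $Z_{0,\mathbf{c}'}(W_b)/\mathfrak{n}(0)$ agree once the left-hand side is pushed forward by $\Phi$. Here I would use that $\Phi$ was built in Corollary \ref{cor:geo} so that its comorphism is $(\Phi^*)^{-1} = \textsf{S}_{W_b,0}^{-1}\circ\theta\circ\textsf{S}_{W,b}$; hence restricting scalars along $\Phi^*$ converts the $Z_{0,\mathbf{c}}(W)/\mathfrak{m}(b)$-action on $\bar{H}_W\mathbf{e}_W$ into precisely the $Z(\bar{H}_{W_b})$-action (through $\textsf{S}_{W_b,0}$) appearing in the Lemma. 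The $W$-equivariance is then automatic, since $\theta|_{\C W} = \iota$ and the isomorphism of Lemma \ref{lem:centralizeriso} is $\C W$-equivariant, while $W$ acts trivially on both base schemes, so that $\Phi_*$ preserves the equivariant structure.

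The main obstacle is the bookkeeping in this last step: one must verify that the identification of the spherical subalgebra of $C(W,W_b,\bar{H}_{W_b})$ with $\mathbf{e}_{W_b}\bar{H}_{W_b}\mathbf{e}_{W_b}$ used in (\ref{eq:sphericaliso}) is the \emph{same} identification under which Lemma \ref{lem:centralizeriso} realizes the $Z(\bar{H}_{W_b})$-action (via the Satake isomorphism $\textsf{S}_{W_b,0}$ for $W_b$) as the right spherical action on $C(W,W_b,\bar{H}_{W_b})\,\iota(\mathbf{e}_W)$. Granting this compatibility, which is exactly what forces the two $Z_{0,\mathbf{c}'}(W_b)/\mathfrak{n}(0)$-module structures to coincide, the isomorphism (\ref{eq:mainiso}) follows.
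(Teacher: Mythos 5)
Your proposal is correct and follows essentially the same route as the paper: reduce to global sections, transport $\bar{H}_W\mathbf{e}_W$ through $\theta$ to $C(W,W_b,\bar{H}_{W_b})\,\iota(\mathbf{e}_W)$, match the two central module structures via the comorphism formula $(\Phi^*)^{-1}=\textsf{S}_{W_b,0}^{-1}\circ\theta\circ\textsf{S}_{W,b}$, and conclude with Lemma \ref{lem:centralizeriso} for $G=W$, $H=W_b$, $A=\bar{H}_{W_b}$. The compatibility of spherical-subalgebra identifications that you flag as the main obstacle is exactly the point the paper's proof also leans on (and treats somewhat tersely), so no substantive difference remains.
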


\begin{proof}
Since $\pi^{-1}_{W}(b)$ is an affine scheme, to show that we have an
isomorphism of $W$-equivariant sheaves as stated in (\ref{eq:mainiso})
it suffices to show that the global sections are isomorphic as
$(W, Z_{0,\mathbf{c}'}(W_b) \, / \, \langle \mathfrak{n}(0) \rangle =: \textsf{Z}
)$-bimodules. Taking global sections gives
\begin{displaymath}
\Phi_* \left( \mathcal{R} [ W ]_{|_{\pi_{W}^{-1}(b)}} \right)
(\pi_{W_b}^{-1}(0)) = \left( \frac{H_{0,\mathbf{c}}(W) }{\langle
  \mathfrak{m}(b)  \rangle} \right) \mathbf{e}_W  
\end{displaymath}
and
\begin{displaymath}
\texttt{Ind}_{\, W_b}^{\, W} \mathcal{R} [ W_b ]_{|_{\pi_{W_b}^{-1}(0)}}
(\pi_{W_b}^{-1}(0)) = \texttt{Ind}_{\, W_b}^{\, W} \left(
\frac{H_{0,\mathbf{c}'}(W_b,\mathfrak{h})}{\langle
  \mathfrak{n}(0)\rangle} \right) \mathbf{e}_{W_b} .
\end{displaymath}
Thus we must show that 
\begin{displaymath}
\textsf{He} := \left( \frac{H_{0,\mathbf{c}}(W) }{\langle
  \mathfrak{m}(b)  \rangle} \right) \mathbf{e}_W  \simeq
\texttt{Ind}_{\, W_b}^{\, W} \left(
\frac{H_{0,\mathbf{c}'}(W_b,\mathfrak{h})}{\langle
  \mathfrak{n}(0)\rangle} \right) \mathbf{e}_{W_b}
\end{displaymath}
as $(W, \textsf{Z})$-bimodules. Applying the isomorphism $\theta$ (of (\ref{eq:quoiso}))
to $\textsf{He}$, and noting that the restriction of $\theta$ to $\C W$
is the map $\iota$, gives
\begin{displaymath}
\theta \, : \, \textsf{He} \simeq C \left(
W,W_b,\frac{H_{0,\mathbf{c}'}(W_b,\mathfrak{h})}{\langle
  \mathfrak{n}(0)\rangle} \right) \iota ( \mathbf{e}_W) .
\end{displaymath}
However, we now have two different actions of $\textsf{Z}$ on
$\textsf{He}$. It acts on $\textsf{He}$, viewed as global sections, via
the map $\Phi^*$, but acts on the right of $C \left(
W,W_b,\frac{H_{0,\mathbf{c}'}(W_b,\mathfrak{h})}{\langle
  \mathfrak{n}(0)\rangle} \right) \iota ( \mathbf{e}_W)$ via
$\theta^{-1}$. These two actions are the same: as stated in (\ref{eq:phiiso}),
\begin{displaymath}
\Phi^* = \textsf{S}^{-1}_{W,b} \circ \theta^{-1} \circ S_{W_b,0},
\end{displaymath}
therefore
\begin{displaymath}
h \, \mathbf{e}_W \cdot \Phi^*(z) = h \, \mathbf{e}_W \cdot \textsf{S}^{-1}_{W,b} \circ \theta \circ S_{W_b,0} (z) = h \, \mathbf{e}_W \cdot \mathbf{e}_W \, \theta^{-1}( \mathbf{e}_{W_b} \cdot z) = h \, \mathbf{e}_W
\cdot \theta^{-1}(z),
\end{displaymath} 
where $z \in \textsf{Z}$ and $h \, \mathbf{e}_W \in
\textsf{He}$ (recall that $\theta( \, \mathbf{e}_W ) = \mathbf{e}_{W_b}$, c.f. (\ref{eq:phiiso})). Noting that $\textsf{Z}$ is a subalgebra of the centre
of $H_{0,\mathbf{c}'}(W_b,\mathfrak{h}) \, / \, \langle
  \mathfrak{n}(0)\rangle$, the required bimodule isomorphism is given
  by Lemma \ref{lem:centralizeriso} where $G = W$, $H = W_b$ and $A = H_{0,\mathbf{c}'}(W_b,\mathfrak{h}) \, / \, \langle
  \mathfrak{n}(0)\rangle$.
\end{proof}

\begin{example}
In the case $W = S_n, \mathfrak{h} = \C^n$, the Calogero-Moser space $X_{\mathbf{c}}(S_n)$ has been shown by Etingof and Ginzburg, \cite[Theorem 1.23]{1}, to be isomorphic to the classical Calogero-Moser space as introduced by Kazhdan, Kostant and Sternberg and studied by Wilson, \cite{2}. It is known to be smooth for $\mathbf{c} \neq 0$ (\cite[Corollary 16.2]{1} or \cite[Proposition 1.7]{2}), therefore \cite[Theorem 1.7 (i)]{1} implies that $\mathcal{R}[S_n]$ is a vector bundle of rank $n!$ on $X_{\mathbf{c}}(S_n)$. Identifying $\C^n /S_n$ with $Sym^n(\C)$, a point of $\C^n / S_n$ has the form $n_1 x_1 + \dots + n_k x_k$, where $n_1 + \dots + n_k = n$ and $x_1, \dots , x_k \in \C$ are pairwise distinct. Given $b \in \C^n$ such that $S_n \cdot b = n_1 x_1 + \dots + n_k x_k$, the stabilizer $(S_n)_b$ is conjugate to $S_{n_1} \times \dots \times S_{n_k}$. For $W = S_n$, the isomorphism of Corollary \ref{cor:geo} induces, after factoring out nilpotent elements, an isomorphism of varieties
\begin{equation}\label{eq:spaces}
\pi^{-1}_{S_n}(b) \simeq \pi^{-1}_{S_{n_1}}(0) \times \dots \times \pi^{-1}_{S_{n_k}}(0).
\end{equation}
In \cite[Lemma 7.1]{2}, Wilson explicitly constructs an isomorphism between the subvarieties of the classical Calogero-Moser space coinciding with the varieties of (\ref{eq:spaces}). Let $\boxtimes$ denote the external tensor product of vector bundles, then Theorem \ref{thm:main} implies that
\begin{displaymath}
\Phi_* \left( \mathcal{R} [ S_n ]_{|_{\pi_{S_n}^{-1}(b)}} \right) \simeq \texttt{Ind}_{S_{n_1} \times \dots \times S_{n_k}}^{S_n} \left( \mathcal{R}[S_{n_1}]_{|_{\pi_{S_{n_1}}^{-1}(0)}} \boxtimes \, \dots \, \boxtimes \mathcal{R}[S_{n_k}]_{|_{\pi_{S_{n_k}}^{-1}(0)}} \right)
\end{displaymath}
as $S_n$-equivariant vector bundles. This confirms the conjectured factorization given in \cite[11.27]{1}.
\end{example}

\section*{Acknowledgements}

The research described here was done at the University of Edinburgh with the financial support of the EPSRC. This material will form part
of the author's PhD thesis for the University of Edinburgh. The author would like to express his gratitude to his supervisor, Professor Iain Gordon, for suggesting this problem and for his help, encouragement and patience.

\bibliographystyle{plain}
\bibliography{biblo}

\end{document}